\theoremstyle{plain}
\newtheorem{theorem}{Theorem}
\newtheorem{lemma}[theorem]{Lemma}
\theoremstyle{remark}
\newtheorem*{remark}{Remark}
\DeclareMathOperator{\BMO}{BMO}
\DeclareMathOperator{\VMO}{VMO}
\DeclareMathOperator{\UC}{UC}
\DeclareMathOperator{\BV}{BV}
\DeclareMathOperator{\supp}{supp}
\DeclareMathOperator{\ba}{ba}
\DeclareMathOperator{\distanza}{dist}
\newcommand\fff{\mathcal{F}}
\newcommand\eee{\varepsilon}
\newcommand\ttt{\vartheta}
\newcommand\tvet{\boldsymbol{\ttt}}
\newcommand\vf{\varphi}
\newcommand{\reale}{{\mathbb R}}
\newcommand{\de}{\,{\rm d}}
\newcommand\bbb{\mathcal{B}}
\newcommand\ds{\displaystyle}
\begin{document}

\title[Atomic decompositions, two stars theorems, and distances]{Atomic decompositions, two stars theorems, and distances for the Bourgain--Brezis--Mironescu space and other big spaces}
\date{\today}

\author[L. D'Onofrio]{Luigi D'Onofrio}
\address{Dipartimento di Scienze e Tecnologie, Universit\`a degli Studi di Napoli ``Parthenope", Centro Direzionale Isola C4, 80100 Napoli, Italy}
\email{donofrio@uniparthenope.it}

\author[L. Greco]{Luigi Greco}
\address{Dipartimento di Ingegneria Elettrica e delle Tecnologie dell’Informazione, Universit\`a degli Studi di Napoli ``Federico II”, Via Claudio 21, 80125 Napoli, Italy}
\email{luigreco@unina.it}

\author[K.-M. Perfekt]{Karl-Mikael Perfekt}
\address{Department of Mathematics and Statistics,
	University of Reading, Reading RG6 6AX, United Kingdom}
\email{k.perfekt@reading.ac.uk}

\author[C. Sbordone]{Carlo Sbordone}
\address{Dipartimento di Matematica e Applicazioni ``R. Caccioppoli",
Universit\`a degli Studi di Napoli  ``Federico II",
Via Cintia, 80126 Napoli, Italy}
\email{sbordone@unina.it}

\author[R. Schiattarella]{Roberta Schiattarella}
\address{Dipartimento di Matematica e Applicazioni ``R. Caccioppoli",
Universit\`a degli Studi di Napoli  ``Federico II",
Via Cintia, 80126 Napoli, Italy}
\email{roberta.schiattarella@unina.it}

\subjclass{}
\keywords{}

\thanks{ }

\begin{abstract}
Given a Banach space $E$ with a supremum-type norm induced by a collection of operators, we prove that $E$ is a dual space and provide an atomic decomposition of its predual. We apply this result, and some results obtained previously by one of the authors, to the function space $\bbb$ introduced recently by Bourgain, Brezis, and Mironescu. This yields an atomic decomposition of the predual $\bbb_\ast$, the biduality result that $\bbb_0^\ast = \bbb_\ast$ and $\bbb_\ast^\ast = \bbb$, and a formula for the distance from an element $f \in \bbb$ to $\bbb_0$.
\end{abstract}

\subjclass[2010]{}

\maketitle
\section{Introduction}
Suppose that a Banach space $E$ is defined and normed by the fact that $x \in E$ if and only if $\sup_{L \in \mathcal{L}} \|Lx\|_{Y} < \infty$, where $\mathcal{L}$ is a collection of operators $L \colon X \to Y$, $X$ and $Y$ Banach spaces. Spaces of this kind include  the space of bounded mean oscillation ($\BMO$), H\"older spaces, the space of bounded variation ($\BV$), Marcinkiewicz spaces, and various spaces of holomorphic functions of weighted or M\"obius invariant type.

Most of these spaces are known to have a predual $E_*$ whose elements can be defined in terms of a decomposition into designated ``atoms''. See for instance \cite{BB18, DHKY18} for two recent examples related to what will be our main application. While the general line of argument to obtain such atomic decompositions has appeared frequently and repeatedly, the main purpose of this note is to provide a completely functional analytic proof, independent of any particular structure of $\mathcal{L}$.

Another typical result, in the cases where it is possible to define a sufficiently rich ``vanishing'' subspace $E_0 \subset E$, is the ``two stars theorem''. Namely, that $E_0^\ast = E_\ast$. Furthermore, the distance from an element $f \in E$ to $E_0$ is usually given by an appropriate limit of the defining functionals. The pair $(\BMO, \VMO)$ provides a familiar example. These phenomena were formalized by one of the authors in \cite{Per13, Per17}, and they were proven to hold in a fairly general context (without giving any description of $E_\ast$). See also \cite{DSS18} for a survey.

Our second purpose is to demonstrate the application of these results to the new function space $\bbb$ introduced by Bourgain, Brezis, and Mironescu \cite{BBM15}. Recent work related to this space can also be found in \cite{ABBF, AC, FFGS, FMS1, FMS2}. For $d \geq 1$, $f \in L^1((0,1)^d)$, and a cube $Q_\varepsilon \subset (0,1)^d$ with sides of length $\varepsilon$ and parallel to the co-ordinate axes, let $f_{Q_\varepsilon}$ denote the average of $f$ on $Q_\varepsilon$. Define the norm (modulo constants)
\begin{equation}\label{BBMnorm}
\|f\|_{\bbb}:= \sup_{0< \varepsilon \leq 1} [f]_\varepsilon,
\end{equation}
where
$$[f]_\varepsilon=\varepsilon^{d-1} \sup_{\mathcal F_\varepsilon} \sum_{Q_{\varepsilon}\in \mathcal F_\varepsilon}  \frac{1}{|Q_{\varepsilon}|}\int_{Q_{\varepsilon}}\left|f(x)-f_{Q_\varepsilon}\right|\,dx\,.
$$
Here $\mathcal F_\epsilon$ denotes a collection of mutually disjoint $\varepsilon$-cubes $Q_\varepsilon \subset (0,1)^d$  such that the cardinality  $|\mathcal{F}_\varepsilon| \leq \varepsilon^{1-d}$, and the supremum is taken over all such collections. The space $\bbb$ is then defined as
$$\bbb = \{f \in L^1((0,1)^d) \, : \, \|f\|_{\bbb} < \infty\}.$$
When $d=1$, $\bbb = \BMO$. For $d\geq 2$, the $\bbb$-norm is strictly weaker than the $\BMO$-norm. In fact, both $\BMO$ and $\BV$ are continuously contained in $\bbb$ (see \cite{BBM15}).

The separable vanishing subspace $\bbb_0$ consists of those $f \in \bbb$ such that
$$\varlimsup_{\varepsilon \to 0} \;[f]_{\varepsilon}= 0.$$
$\VMO$ and $W^{1,1}$ are continuously contained in $\bbb_0$.

For the space $\bbb$, our result yields the following.
\begin{theorem} \label{thm:Bpredual}
$\bbb$ has an (isometric) predual $\bbb_\ast$. Every $\varphi \in \bbb_\ast$ is of the form
\begin{equation} \label{eq:Batdecomp}
\varphi = \sum_{n=1}^\infty \lambda_n g_n,
\end{equation}
where $(\lambda_n) \in \ell^1$ and each atom $g_n$ is associated with an $\varepsilon = \varepsilon_n$ and a collection of disjoint $\varepsilon$-cubes $\mathcal{F}_\varepsilon$ such that $|\mathcal{F}_\varepsilon| \leq \varepsilon^{1-d}$ and
	    \begin{itemize}
	\item $\supp g_n \subset \cup \mathcal{F}_{\varepsilon}$,
\item $|g_n|\chi_{Q_\varepsilon} \leq \varepsilon^{d-1} \frac{1}{|Q_\varepsilon|}$ for every $Q_\varepsilon \in \mathcal{F}_{\varepsilon}$,	
\item $\int_{Q_\varepsilon} g_n \, dx= 0$ for every $Q_\varepsilon \in \mathcal{F}_{\varepsilon}$.
	
\end{itemize}
The action of $f \in \bbb$ on $\varphi$ is given by
$$f(\varphi) = \sum_{n=1}^\infty \lambda_n \int_{(0,1)^d} fg_n \, dx,$$
and
$$C \inf \sum_{n=1}^\infty |\lambda_n| \leq \|\varphi\|_{B_\ast} \leq \inf \sum_{n=1}^\infty |\lambda_n|,$$
where $C > 0$ is an absolute constant, and the infimum is taken over all representations of $\varphi$.
\end{theorem}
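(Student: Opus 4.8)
The plan is to exhibit $\bbb$ as an instance of the abstract supremum-type space and then read off Theorem~\ref{thm:Bpredual} from the general atomic decomposition. I would take $X=L^1((0,1)^d)$ modulo constants, and for each $\varepsilon\in(0,1]$ and each admissible collection $\mathcal F$ of disjoint $\varepsilon$-cubes with $|\mathcal F|\le\varepsilon^{1-d}$ introduce the operator $L_{\varepsilon,\mathcal F}f=\big([f|_Q]\big)_{Q\in\mathcal F}$ into the weighted $\ell^1$-direct sum $Y_{\varepsilon,\mathcal F}=\bigoplus_{Q\in\mathcal F}L^1(Q)/\reale$, normed by $\|(h_Q)\|=\varepsilon^{d-1}\sum_Q|Q|^{-1}\inf_{c\in\reale}\int_Q|h_Q-c|\,dx$. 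Then $\sup_{\varepsilon,\mathcal F}\|L_{\varepsilon,\mathcal F}f\|$ is a norm of exactly the form required by the abstract result (with the target depending on the operator, which is harmless, or after embedding each $Y_{\varepsilon,\mathcal F}$ into a fixed space). Since on each cube the median realizes $\inf_c\int_Q|f-c|$ and differs from the mean by at most a factor $2$, this norm satisfies $\tfrac12\|f\|_\bbb\le\sup_{\varepsilon,\mathcal F}\|L_{\varepsilon,\mathcal F}f\|\le\|f\|_\bbb$.

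Applying the abstract theorem would then produce an isometric predual for this equivalent norm, realized as a quotient of $\ell^1$ of the duals $Y_{\varepsilon,\mathcal F}^\ast$, together with the decomposition $\varphi=\sum_n\lambda_n g_n$ where $g_n=L_{\varepsilon_n,\mathcal F_n}^\ast\eta_n$ and $\|\eta_n\|\le1$. The content is to make these objects explicit. The dual of $L^1(Q)/\reale$ is the mean-zero subspace of $L^\infty(Q)$ with its $L^\infty$-norm, so, unwinding the weights, the condition $\|\eta\|_{Y_{\varepsilon,\mathcal F}^\ast}\le1$ means $\eta=(g_Q)_Q$ with $\int_Q g_Q=0$ and $\|g_Q\|_{L^\infty(Q)}\le\varepsilon^{d-1}/|Q|$. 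Computing the adjoint gives $L_{\varepsilon,\mathcal F}^\ast\eta=\sum_{Q\in\mathcal F}g_Q\chi_Q$, which is precisely a function $g_n$ with support in $\cup\mathcal F$ satisfying the two remaining bullet conditions, while the action $f(\varphi)=\sum_n\lambda_n\int fg_n$ is immediate.

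For the norm comparison, the upper bound is direct: for any such atom $g$ associated with $(\varepsilon,\mathcal F)$, writing $\int fg=\sum_Q\int_Q(f-f_Q)g$ and using $|g|\le\varepsilon^{d-1}/|Q|$ gives $|\int fg|\le[f]_\varepsilon\le\|f\|_\bbb$, so every atom lies in the unit ball of $\bbb_\ast$ for the genuine norm, whence $\|\varphi\|_{\bbb_\ast}\le\sum_n|\lambda_n|$ for every representation and thus $\|\varphi\|_{\bbb_\ast}\le\inf\sum_n|\lambda_n|$. For the lower bound, the abstract theorem identifies $\inf\sum_n|\lambda_n|$ with the predual norm of the median-normalized space; transferring through the equivalence $\tfrac12\|\cdot\|_\bbb\le\|\cdot\|'\le\|\cdot\|_\bbb$ of the first paragraph costs a factor $2$ in passing to functionals, which yields $\tfrac12\inf\sum_n|\lambda_n|\le\|\varphi\|_{\bbb_\ast}$. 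This gives the stated estimate with $C=\tfrac12$. Finally, repeating the construction with $\int_Q|f-f_Q|$ in place of the median norm reproduces the genuine $\bbb$-norm exactly, so the abstract theorem also delivers the isometric identification $\bbb=(\bbb_\ast)^\ast$.

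The step I expect to require the most care is the explicit computation of the duals $Y_{\varepsilon,\mathcal F}^\ast$ and of the adjoints $L_{\varepsilon,\mathcal F}^\ast$, since it is here that the clean normalization of the atoms, and with it the constant $C$, is pinned down; one must also verify that this concrete choice meets the hypotheses of the general theorem, in particular that the image of $\bbb$ is weak-$\ast$ closed so that $\bbb$ is genuinely the dual of $\bbb_\ast$ rather than merely isometrically embedded in a bidual.
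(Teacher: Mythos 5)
Your overall strategy---realizing $\bbb$ as a supremum-type space over concrete operators, applying the abstract atomic decomposition, and identifying the atoms by computing dual spaces and adjoints---is exactly the paper's strategy, and your adjoint computation, the form of the atoms, and the upper bound $\|\varphi\|_{\bbb_\ast}\le\inf\sum_n|\lambda_n|$ all match. But there is a genuine gap at precisely the point you flag as the crux: you take $X=L^1((0,1)^d)/\reale$, and $L^1$ is not reflexive. Theorem~\ref{thm:atomicdecomp} assumes $X$ reflexive, and its proof needs this in an essential way: the weak-star closedness of the embedded copy of $E$ (via Krein--Smulian) is obtained by extracting from a bounded net in $E$ a subnet converging \emph{weakly in $X$}, which is exactly what can fail for bounded sets in $L^1$. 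So citing the abstract theorem with $X=L^1/\reale$ is a misapplication, and the deferred verification that the image of $\bbb$ is weak-$\ast$ closed cannot be carried out in your setup without new input. The paper avoids this by invoking the Bourgain--Brezis--Mironescu embedding $\bbb\hookrightarrow L^{d/(d-1),\infty}$ and taking $X=L^p((0,1)^d)/\complesso$ with $1<p<d/(d-1)$, which is reflexive and separable. (One could instead try to salvage $X=L^1$ by noting that the same embedding makes $\bbb$-bounded sets uniformly integrable and invoking Dunford--Pettis, but that still requires the BBM embedding plus a modification of the abstract proof; your proposal does neither.)

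Two further deviations are fixable but unjustified as written. Your operators land in target spaces $Y_{\eee,\mathcal F}$ that vary with the operator, and your index family is uncountable, whereas Theorem~\ref{thm:atomicdecomp} concerns a \emph{sequence} of operators into a \emph{fixed} $Y$; you wave both issues away. The paper resolves both at once: it defines $L_{\fff_\eee}f=\eee^{d-1}|Q_\eee|^{-1}\sum_j\chi_{Q_\eee(a_j)}(f-f_{Q_\eee(a_j)})$ mapping into the single space $Y=L^1((0,1)^d)$, so that $\|f\|_\bbb=\sup\|L_{\fff_\eee}f\|_{L^1}$ holds \emph{exactly} and the operator is formally self-adjoint (making $L_{\fff_\eee}^\ast y^\ast=L_{\fff_\eee}y^\ast$ immediate), and it equips the family $\mathcal L$ of collections with a separable, locally compact topology, proves continuity of $\fff_\eee\mapsto L_{\fff_\eee}f$, and passes to a countable dense subfamily. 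This also makes your median detour unnecessary: since the paper's norm identity is exact, the isometric predual statement is immediate, and the constant $C$ enters only through the normalization of the atoms ($g_n=L_{\fff^n_{\eee_n}}y_n^\ast/(2\|y_n^\ast\|_{L^\infty})$, $\lambda_n=2\|y_n^\ast\|_{L^\infty}$, giving $C=\tfrac12$). Note also that your final patch---rerunning the construction with $\int_Q|f-f_Q|$ in place of the quotient norm---does not deliver the stated atoms isometrically either: the dual unit ball of $L^1(Q)/\reale$ under that norm is $\{g:\int_Q g=0,\ \inf_c\|g-c\|_{L^\infty}\le1\}$, which is strictly larger than $\{g:\int_Q g=0,\ \|g\|_{L^\infty}\le1\}$, so a factor of $2$ in the atom bound is unavoidable there as well.
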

\begin{remark}
As expected, the result shows that $\bbb_\ast$ is continuously contained in the atomic Hardy space $H^1((0,1)^d)$. In particular, the convergence of the series \eqref{eq:Batdecomp} can be understood in $H^1$, and thus in $L^1$.
\end{remark}
As mentioned before, we will also show the following biduality and distance result. The meaning of the duality relations will be made more precise in Section~\ref{sec:bidual}. Let $\UC \subset \bbb_0$ denote the space of uniformly continuous functions on $(0,1)^d$ (modulo constants). Note that $\UC$ is dense in $\bbb_0$, by the remark after Lemma~\ref{ap-property}.
\begin{theorem} \label{thm:Bbidual}
We have that $\bbb_0^\ast = \bbb_\ast$ and $\bbb_0^{\ast \ast} = \bbb_\ast^\ast = \bbb$, isometrically via the $L^2((0,1)^d)$-pairing. For any $f \in \bbb$ it holds that
\begin{equation} \label{eq:Bdistformula}
	  \distanza(f,  \bbb_0) = \distanza(f,  \UC) = \min_{g \in \mathcal{B}_0} \|f - g\|_{\mathcal{B}} = \varlimsup_{\varepsilon\rightarrow 0}\;[ f]_\varepsilon.
\end{equation}
\end{theorem}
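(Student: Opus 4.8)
The plan is to prove Theorem~\ref{thm:Bbidual} by specializing the abstract ``two stars'' framework of \cite{Per13, Per17}, with the predual supplied by Theorem~\ref{thm:Bpredual}. That framework begins with a Banach space $E$ whose norm is of the form $\|x\|_E = \sup_{L\in\mathcal L}\|Lx\|_Y$, distinguishes a closed \emph{vanishing} subspace $E_0$, and, once an approximation hypothesis is in force, simultaneously delivers the biduality $E_0^\ast = E_\ast$ and $E_0^{\ast\ast} = E_\ast^\ast = E$ and the distance formula expressing $\distanza(x,E_0)$ as the limiting value of the defining functionals. The proof therefore splits into two tasks: presenting $\bbb$ in this form, and verifying the hypothesis for $\bbb$.

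For the first task, note that $\bbb$ already has the required shape. For each scale $\varepsilon\in(0,1]$ and each admissible family $\mathcal F_\varepsilon$ of disjoint $\varepsilon$-cubes with $|\mathcal F_\varepsilon|\le\varepsilon^{1-d}$, let $L_{\varepsilon,\mathcal F}$ send $f$ to $\big(\varepsilon^{d-1}\frac{1}{|Q|}(f-f_Q)\chi_Q\big)_{Q\in\mathcal F}\in\ell^1\big(\mathcal F;L^1(Q)\big)$, so that $\|L_{\varepsilon,\mathcal F}f\|=\varepsilon^{d-1}\sum_{Q\in\mathcal F}\frac{1}{|Q|}\int_Q|f-f_Q|$ and hence $\|f\|_\bbb=\sup_{\varepsilon,\mathcal F}\|L_{\varepsilon,\mathcal F}f\|$ is genuinely a supremum-type norm. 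The scale $\varepsilon$ provides the direction ``to infinity,'' and $\bbb_0=\{f:\varlimsup_{\varepsilon\to0}[f]_\varepsilon=0\}$ is exactly the associated vanishing subspace. By Theorem~\ref{thm:Bpredual}, $\bbb_\ast$ is an isometric predual of $\bbb$ that embeds into $H^1((0,1)^d)$, and the duality is the $L^2$-pairing; this is the predual the framework requires.

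The substance of the proof is the verification of the approximation hypothesis, which is the role of Lemma~\ref{ap-property}. I would produce a uniformly bounded net of operators $P_n$ on $\bbb$ --- concretely mollifications at scale $2^{-n}$, suitably adapted near $\partial(0,1)^d$ --- enjoying the following: $P_n$ maps $\bbb$ into $\UC\subset\bbb_0$; $P_n$ is weak-star continuous on $\bbb=\bbb_\ast^\ast$, with pre-adjoint preserving $\bbb_\ast$ (using the $H^1$-realization of the predual); $P_nf\to f$ weak-star; and, crucially, $\varlimsup_n\|(I-P_n)f\|_\bbb\le\varlimsup_{\varepsilon\to0}[f]_\varepsilon$. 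This last, asymptotic, estimate is the main obstacle. Because $[f]_\varepsilon$ is itself a supremum over families $\mathcal F_\varepsilon$ of bounded cardinality, one must check that smoothing at scale $2^{-n}$ neither inflates the oscillation at comparable scales nor leaves uncontrolled oscillation at scales $\varepsilon\ll2^{-n}$. It is precisely this double-supremum bookkeeping --- rather than any functional-analytic subtlety --- where I expect the real work to lie; note in particular that crude alternatives such as dyadic conditional expectations fail here, since their jumps contribute to $[f]_\varepsilon$ at every small scale and so do not land in $\bbb_0$.

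Granting Lemma~\ref{ap-property}, the abstract theorem applies and yields $\bbb_0^\ast=\bbb_\ast$ and $\bbb_0^{\ast\ast}=\bbb_\ast^\ast=\bbb$, isometrically via the $L^2$-pairing, together with $\distanza(f,\bbb_0)=\varlimsup_{\varepsilon\to0}[f]_\varepsilon$. The easy half of this formula, namely $\distanza(f,\bbb_0)\ge\varlimsup_{\varepsilon\to0}[f]_\varepsilon$, is immediate from subadditivity of the seminorms $[\cdot]_\varepsilon$ and the definition of $\bbb_0$; the reverse is what the estimate on $I-P_n$ provides. That the infimum is attained --- so that it is the minimum in \eqref{eq:Bdistformula} --- follows since the framework exhibits $\bbb_0$ as M-embedded with $\bbb_0^{\ast\ast}=\bbb$, and M-embedded spaces are proximinal in their biduals. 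Finally $\distanza(f,\UC)=\distanza(f,\bbb_0)$ because $\UC$ is dense in $\bbb_0$ by the remark after Lemma~\ref{ap-property}, which completes the chain of equalities in \eqref{eq:Bdistformula}.
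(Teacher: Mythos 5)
Your overall strategy is the paper's: place $\bbb$ in the abstract framework of \cite{Per13, Per17} and invoke Theorems~\ref{thm:bidual1} and \ref{thm:bidual2}, with the predual coming from Theorem~\ref{thm:Bpredual}. But there is a genuine gap at the crux, and it stems from misreading what that framework asks for. Its hypothesis (AP) requires only, for each $f\in\bbb$, a sequence $(f_n)\subset\bbb_0$ with $\|f_n\|_{\bbb}\le\|f\|_{\bbb}$ and $f_n\to f$ \emph{in the ambient space} $X$; no quantitative control of $\|f-f_n\|_{\bbb}$ is needed, and the weak-star continuity of your operators $P_n$, or the invariance of $\bbb_\ast$ under their pre-adjoints, plays no role. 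Instead you propose to verify the much stronger estimate $\varlimsup_n\|(I-P_n)f\|_{\bbb}\le\varlimsup_{\eee\to 0}[f]_\eee$, which is nothing but the nontrivial inequality $\distanza(f,\bbb_0)\le\varlimsup_{\eee\to 0}[f]_\eee$ of \eqref{eq:Bdistformula} itself --- precisely the inequality the abstract machinery is designed to hand you for free. You then leave this estimate unproven (``where I expect the real work to lie''), so as written the argument defers, rather than supplies, the key step; and had you proved it, the appeal to the framework would be largely superfluous for the distance formula.

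What the paper actually verifies is the genuinely weaker Lemma~\ref{ap-property}, and even that is not a routine mollification: plain mollification at scale $2^{-n}$ is undefined near $\partial(0,1)^d$, and, more importantly, a ``uniformly bounded net'' of smoothing operators is not enough --- the isometric conclusions require the bound $\|f_n\|_{\bbb}\le\|f\|_{\bbb}$ with constant exactly $1$. The paper achieves this by first contracting the domain, $g(x)=f((1-2\ttt)x+\tvet)$, and then mollifying at scale $\ttt$: the affine change of variables sends admissible families of $\eee$-cubes to admissible families of $(1-2\ttt)\eee$-cubes, giving $\|h_\ttt\|_{\bbb}\le (1-2\ttt)^{1-d}\|f\|_{\bbb}$, and the normalization $f_n=(1-2\ttt_n)^{d-1}h_{\ttt_n}$ is then exactly contractive, uniformly continuous (hence in $\bbb_0$), and converges to $f$ in $L^p$. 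Two further setup points you gloss over are needed before ``the framework applies'' has meaning: one must exhibit a separable reflexive ambient space, which the paper does by taking $X=L^p((0,1)^d)/\complesso$ with $1<p<d/(d-1)$, using the embedding $\bbb\hookrightarrow L^{d/(d-1),\infty}$ from \cite{BBM15}; and one must equip the index set $\mathcal{L}=\{\fff_\eee\}$ with a $\sigma$-compact, locally compact, separable Hausdorff topology for which $\fff_\eee\mapsto L_{\fff_\eee}f$ is continuous and for which escaping every compact set is equivalent to $\eee\to 0$ (the paper parametrizes the families of $m$ cubes by their centres and side length and checks properness of $\pi(\fff_\eee)=\eee$). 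Without these, neither (AP) nor the identification of $\varlimsup_{L\to\infty}\|Lf\|_Y$ with $\varlimsup_{\eee\to 0}[f]_\eee$ --- and hence none of the equalities in \eqref{eq:Bdistformula} --- is in place.
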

\begin{remark}
The result of \cite{Per17} implies that $\bbb_0$ is an $M$-ideal in $\bbb$. Thus, by $M$-structure theory \cite{HSW75}, the minimizer of \eqref{eq:Bdistformula} exists but is never unique, unless $f \in \bbb_0$.
\end{remark}
\begin{remark}
If the constraint on the cardinality, $|\fff_\eee|\le \eee^{1-d}$, is removed from the definition of $\bbb$, we find the familiar space $\BV$ of functions of bounded variation on $(0,1)^d$. Indeed, it can easily be shown that $f\in \BV$ if and only if
\[\sup_{0<\eee\leq 1}\;\eee^{d-1}\sup_{\mathcal G_\eee}\sum_{Q_\eee\in \mathcal G_\eee}\frac{1}{|Q_{\varepsilon}|}\int_{Q_{\varepsilon}}\left|f(x)-f_{Q_\varepsilon}\right|\,dx< \infty,\]
where the supremum now runs over all families $\mathcal G_\eee$ of pairwise disjoint $\eee$-cubes contained in $(0,1)^d$. Compare with \cite{FFGS, FMS1}. Moreover, the above quantity is equivalent to the total variation $|Df|((0,1)^d)$. In this case, the corresponding ``vanishing subspace'' is trivial:
\[\lim_{\eee \to 0}\;\eee^{d-1}\sup_{\mathcal G_\eee}\sum_{Q_\eee\in \mathcal G_\eee}\frac{1}{|Q_{\varepsilon}|}\int_{Q_{\varepsilon}}\left|f(x)-f_{Q_\varepsilon}\right|\,dx=0\]
if and only if $f$ is constant.
\end{remark}

\section{Atomic decompositions} \label{sec:atomdecomp}
We will suppose that $X$ is reflexive, while letting $Y$ be any Banach space. In this section, let $(L_n)_{n=1}^\infty$ be a given sequence of bounded operators $L_n \colon X \to Y$, and define
$$E = \{x \in X \, : \, \sup_n \|L_nx\|_Y < \infty\}.$$
We suppose that this defines a Banach space $E$ under the norm
$$\|x\|_E = \sup_n \|L_nx\|_Y,$$
and that $E$ is continuously contained in $X$. We will not attempt to give a general condition under which these two hypotheses hold. We also suppose that $E$ is dense in $X$ in the $X$-norm, for otherwise we may replace $X$ by the closure of $E$ in $X$.

We thus have an isometric embedding
$$V \colon E \to \ell^\infty(Y), \quad Vx(n) = L_nx.$$
The dual of $E$ can therefore be represented as
$$E^\ast \simeq \ba(\mathbb{N}, Y^\ast)/VE^{\perp},$$ 
where $\ba(\mathbb{N}, Y^\ast)$ denotes the space of finitely additive $Y^\ast$-valued set functions on $\mathbb{N}$ of bounded variation \cite{Leon76}, and $VE^\perp$ is its subspace of elements annihilating $VE$. Note that $\ell^1(Y^\ast)$ is naturally understood as the subspace of $\ba(\mathbb{N}, Y^\ast)$ consisting of the countably additive measures.
\begin{theorem} \label{thm:atomicdecomp}
$E$ has an isometric predual $E_\ast$,
$$E_\ast = \ell^1(Y^\ast)/P,$$
where $P = VE^{\perp} \cap \ell^1(Y^\ast)$.
That is, every $x \in E$ corresponds to a functional on $\ell^1(Y^\ast)/P$ given by
\begin{equation} \label{eq:dualpairing}
x ((y_n^\ast)) = \sum_{n=1}^\infty y_n^{\ast}(L_n x) = \sum_{n=1}^\infty L_n^\ast y_n^{\ast}(x), \quad (y_n^\ast) \in \ell^1(Y^\ast),
\end{equation}
and conversely every bounded functional on $\ell^1(Y^\ast)/P$ is given by a unique $x \in E$ according to \eqref{eq:dualpairing}. The norm of $x$ as a functional is equal to its norm as an element of $E$.
\end{theorem}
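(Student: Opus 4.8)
The plan is to prove the theorem by showing that the canonical evaluation map
$$J\colon E\to (E_*)^*,\qquad \langle Jx,\,(y_n^*)+P\rangle=\sum_{n=1}^\infty y_n^*(L_nx),$$
is a well-defined \emph{surjective isometry}. Since $E_*=\ell^1(Y^*)/P$, this identifies $E$ isometrically with $(E_*)^*$, gives the pairing \eqref{eq:dualpairing}, and—because an isometry is injective—also yields the uniqueness of the representing $x$. The well-definedness and the isometry are the easy part and require no reflexivity. As $P\subseteq VE^\perp$, the sum $\sum_n y_n^*(L_nx)$ is unchanged when $(y_n^*)$ is altered by an element of $P$, so $Jx$ descends to $\ell^1(Y^*)/P$, and the estimate $|\sum_n y_n^*(L_nx)|\le \|x\|_E\sum_n\|y_n^*\|_{Y^*}$ gives $\|Jx\|\le\|x\|_E$. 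For the reverse inequality I would choose $n_0$ with $\|L_{n_0}x\|_Y$ close to $\|x\|_E$ and a norming $y^*\in Y^*$; the single-atom sequence $y^*e_{n_0}$ has quotient norm at most $1$ and pairs with $x$ to nearly $\|x\|_E$. Thus $J$ is isometric, so $J(E)$ is norm-closed and, crucially, $J(E)\cap kB_{(E_*)^*}=J(kB_E)$ for every $k>0$.

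The heart of the matter is to upgrade norm-closedness to weak-$*$ closedness, and this is where reflexivity of $X$ is used. First I would observe that $B_E=\{x\in X:\|L_nx\|_Y\le 1\ \forall n\}=\bigcap_n\{x\in X:\|L_nx\|_Y\le 1\}$ is convex, norm-closed (each $L_n$ being bounded), and bounded in $X$ (by the continuous containment $E\hookrightarrow X$), hence weakly compact in $X$ by reflexivity. Next I would verify that $J$ is continuous from $(B_E,\text{weak-}X)$ into $\big((E_*)^*,\text{weak-}*\big)$. For a fixed $\phi=(y_n^*)+P$ the truncations $\xi_N=\sum_{n=1}^N L_n^*y_n^*\in X^*$ satisfy, for $x\in B_E$,
$$\Big|\langle Jx,\phi\rangle-\xi_N(x)\Big|=\Big|\sum_{n>N}y_n^*(L_nx)\Big|\le \|x\|_E\sum_{n>N}\|y_n^*\|_{Y^*}\le \sum_{n>N}\|y_n^*\|_{Y^*},$$
so $x\mapsto\langle Jx,\phi\rangle$ is a uniform limit on $B_E$ of the weakly continuous functionals $\xi_N$, hence weakly continuous on $B_E$. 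Consequently $J(B_E)$ is weak-$*$ compact, and so is $J(kB_E)=kJ(B_E)$ for each $k$.

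Combining the last observation with the identity $J(E)\cap kB_{(E_*)^*}=J(kB_E)$ from the first paragraph, the Krein--Smulian theorem shows that the subspace $J(E)$ is weak-$*$ closed in $(E_*)^*$. To conclude surjectivity I would use that a weak-$*$ closed subspace of a dual space coincides with the whole space once its pre-annihilator in the predual is trivial. If $\phi=(y_n^*)+P\in E_*$ satisfies $\langle Jx,\phi\rangle=\sum_n y_n^*(L_nx)=0$ for all $x\in E$, then $(y_n^*)\in VE^\perp\cap\ell^1(Y^*)=P$, i.e. $\phi=0$ in $E_*$. Hence the pre-annihilator of $J(E)$ is $\{0\}$ and $J(E)=(E_*)^*$.

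I expect the main obstacle to be the weak-$*$ compactness step: one must correctly match the weak topology that $B_E$ inherits from $X$ with the weak-$*$ topology of $(E_*)^*$, and the uniform approximation by the functionals $\xi_N\in X^*$ is exactly the device that makes the pairings with elements of $E_*$ weakly continuous on $B_E$. By contrast, the concluding computation is forced to be trivial by the very definition $P=VE^\perp\cap\ell^1(Y^*)$, which is tailored so that no nonzero element of $E_*$ can annihilate $J(E)$. Everything else is a routine assembly: isometry plus weak-$*$ closedness plus trivial pre-annihilator yield that $J$ is a surjective isometry onto $(E_*)^*$, which is the assertion of the theorem.
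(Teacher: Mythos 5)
Your proof is correct, and its engine is the same as the paper's: the Krein--Smulian theorem combined with the weak compactness, coming from reflexivity of $X$, of the unit ball $B_E$ viewed as a bounded, convex, norm-closed subset of $X$. The packaging, however, is genuinely different. The paper embeds $E$ isometrically into $\ell^\infty(Y^{\ast\ast})$, identified with $\ell^1(Y^\ast)^\ast$ by a vector-valued sequence-space duality theorem \cite{Leon76}, proves by a subnet argument that the image $WE$ is weak-star closed there, and then invokes the abstract fact that a weak-star closed subspace of a dual space is itself a dual space whose predual is the quotient of the predual by the pre-annihilator; the isometric identification and the pairing \eqref{eq:dualpairing} then come for free. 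You instead posit the predual $\ell^1(Y^\ast)/P$ at the outset and prove directly that the evaluation map $J \colon E \to (\ell^1(Y^\ast)/P)^\ast$ is a surjective isometry: the isometry via single-atom norming sequences, the weak-star closedness of $J(E)$ by showing $J$ is weak-to-weak-star continuous on $B_E$ (uniform approximation by the truncations $\xi_N \in X^\ast$, replacing the paper's subnet extraction and coordinatewise identification of the limit), and surjectivity from triviality of the pre-annihilator, which is built into the definition of $P$. What your route buys: you never need to know the dual of $\ell^1(Y^\ast)$, and the continuity-plus-compactness argument avoids nets entirely. What the paper's route buys: the isometry and the converse direction of the theorem require no separate verification, being instances of the general duality for weak-star closed subspaces. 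One small point you should make explicit: $P$ is norm-closed in $\ell^1(Y^\ast)$ (it is the intersection of the kernels of the bounded functionals $(y_n^\ast) \mapsto \sum_n y_n^\ast(L_n x)$, $x \in E$), so that $\ell^1(Y^\ast)/P$ is indeed a Banach space; this is needed both for the quotient norm and to apply Krein--Smulian in its dual.
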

\begin{proof}
Through the canonical embedding $\iota \colon Y \to Y^{\ast \ast}$, we may instead consider the embedding
$$W \colon E \to \ell^{\infty}(Y^{\ast \ast}), \quad Wx(n) = \iota (Vx(n)).$$
Note that $\ell^{\infty}(Y^{\ast \ast}) = \ell^{1}(Y^\ast)^\ast$ \cite{Leon76}. To see that $E$ is a dual space we simply have to verify that $WE$ is weak-star closed in $\ell^\infty(Y^{\ast \ast})$. By the Krein-Smulian theorem, it is enough to check that $WE \cap D_1$ is weak-star closed, where $D_1$ is the closed unit ball with centre $0$ in $\ell^\infty(Y^{\ast \ast})$.

Suppose therefore that $(x_\alpha)$ is a net in the unit ball of $E$ such that $(L_n x_\alpha) \to (y^{\ast \ast}_n)$ weak-star,  where $(y^{\ast \ast}_n) \in \ell^\infty(Y^{\ast \ast})$. Since $E$ is continuously contained in $X$ and $X$ is reflexive, there is a subnet $(x_{\alpha'})$ such that $x_{\alpha'} \to x$ weakly in $X$, for some $x \in X$. For every fixed $n$, $L_n \colon X \to Y$ is continuous, and thus $L_n x_{\alpha'} \to L_nx$ weakly in $Y$. Since closed balls of $Y$ are weakly closed, we conclude that $x$ belongs to the unit ball of $E$. Of course, for every fixed $n$ we also know by hypothesis that $L_n x_{\alpha'} \to y_n^{\ast \ast}$ weak-star in $Y^{\ast \ast}$. Then, for every $y^{\ast} \in Y^\ast$,
$$y_n^{\ast \ast}(y^\ast) = \lim_{\alpha'} y^{\ast}(L_n x_{\alpha'}) = y^{\ast} (L_n x).$$
Hence $(y_n^{\ast \ast}) = (L_n x)$, demonstrating that $WE \cap D_1$ is weak-star closed.

We have shown that $E \simeq WE$ is a dual space, with predual given by
\begin{equation*}
(WE)_* = \ell^1(Y^\ast)/^{\perp}WE = \ell^1(Y^\ast)/P. \qedhere
\end{equation*}
\end{proof}
\begin{remark}
	If $Y$ is a dual space with predual $Y_\ast$, a similar argument shows that
	$$E_\ast = \ell^1(Y_\ast)/^{\perp}VE.$$
\end{remark}
To understand Theorem~\ref{thm:atomicdecomp}, note that finite sums $e_\ast = \sum L_n^\ast y^\ast_n$ belong both to $E_\ast$ and $X_\ast = X^\ast$, and the action of elements $x \in E$ on $e_\ast$ is identical,
$$x((y_n^{\ast})) = e_\ast(x).$$
Clearly, such finite sums are dense in $E_\ast$.
In fact, by restricting the action of $x^\ast \in X^\ast$ from $X$ to $E$, all of $X^\ast$ is continuously contained (and thus dense) in $E_\ast$.

\begin{theorem} \label{thm:xstar}
$X^\ast$ is continuously contained in $E_\ast$. Thus, for any $x^\ast \in X^\ast$ there is a sequence $(y_n^\ast) \in \ell^1(Y^\ast)$ such that $\|(y_n^\ast)\|_{\ell^1(Y^\ast)} \leq C\|x^\ast\|_{X^\ast}$ and
$$x^\ast(x) = \sum_{n=1}^\infty L_n^\ast y_n^\ast(x), \quad x \in E.$$
Here $C > 0$ is an absolute constant.
In particular, if $X$ is separable, then $E_\ast$ is separable.
\end{theorem}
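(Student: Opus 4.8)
The plan is to use the fact that, since $E \simeq (E_\ast)^\ast$, the predual $E_\ast$ — viewed isometrically inside $E^\ast$ — is precisely the set of functionals on $E$ that are continuous for the weak-star topology $\sigma(E,E_\ast)$. Writing $j\colon E\to X$ for the continuous, dense-range inclusion, the content of the theorem is that the restriction $j^\ast x^\ast = x^\ast|_E$ of any $x^\ast\in X^\ast$ is weak-star continuous on $E$. I would first dispose of the norm estimate, which also pins down the constant: for $\|x\|_E\le 1$ we have $\|x\|_X\le\|j\|$, hence $|x^\ast(x)|\le\|j\|\,\|x^\ast\|_{X^\ast}$, so $\|x^\ast|_E\|_{E^\ast}\le\|j\|\,\|x^\ast\|_{X^\ast}$ and the inclusion $X^\ast\to E_\ast$ is continuous once membership is established (the constant $C$ is $\|j\|$, up to the factor $\le 2$ incurred in lifting through the quotient $\ell^1(Y^\ast)\to E_\ast$).

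To prove membership I would establish the stronger, more usable statement that every bounded net $(x_\alpha)$ in $E$ converging weak-star to $x$ also converges weakly to $x$ in $X$. Granting this, $x^\ast(x_\alpha)\to x^\ast(x)$ for every $x^\ast\in X^\ast$, so $x^\ast|_E$ is weak-star continuous on the closed unit ball of $E$. Since that ball is weak-star compact by Banach--Alaoglu, $\ker(x^\ast|_E)$ meets every ball in a weak-star closed set, and the Krein--Smulian theorem then forces $\ker(x^\ast|_E)$ to be weak-star closed; a functional with weak-star closed kernel is weak-star continuous, so $x^\ast|_E\in E_\ast$.

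The heart of the argument, and the step I expect to be the main obstacle, is the claim about bounded weak-star convergent nets. Testing weak-star convergence against the single-index elements $(0,\dots,y^\ast,\dots,0)\in\ell^1(Y^\ast)$ shows $L_nx_\alpha\to L_nx$ weakly in $Y$ for each fixed $n$. As $(x_\alpha)$ is bounded in $E$, it is bounded in $X$; by reflexivity of $X$ any subnet has a further subnet $x_{\alpha'}$ converging weakly in $X$ to some $z$, and weak-to-weak continuity of each $L_n$ gives $L_nz=L_nx$ for all $n$. Then $\sup_n\|L_n(z-x)\|_Y=0$, so $z-x\in E$ with $\|z-x\|_E=0$; definiteness of $\|\cdot\|_E$ yields $z=x$. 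Thus every weakly convergent subnet has limit $x$, and as $(x_\alpha)$ lies in a weakly compact set, the whole net converges weakly to $x$. This is exactly where reflexivity of $X$, continuity of $E\hookrightarrow X$, and injectivity of $x\mapsto(L_nx)$ are used simultaneously.

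Once $x^\ast|_E\in E_\ast=\ell^1(Y^\ast)/P$, I would lift it to a representative $(y_n^\ast)\in\ell^1(Y^\ast)$ of quotient norm $\le C\|x^\ast\|_{X^\ast}$, giving $x^\ast(x)=\sum_n y_n^\ast(L_nx)=\sum_n L_n^\ast y_n^\ast(x)$ for $x\in E$. For the separability statement, note that $X$ reflexive and separable makes $X^{\ast\ast}=X$, hence $X^\ast$, separable; since the finite sums $\sum L_n^\ast y_n^\ast$ already belong to the image of $X^\ast$ in $E_\ast$ and are dense there, that image is a dense separable subset, so $E_\ast$ is separable.
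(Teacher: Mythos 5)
Your proposal is correct and follows essentially the same route as the paper's proof: restrict $x^\ast$ to $E$, reduce via the Krein--Smulian theorem to weak-star closedness of the kernel intersected with balls, extract a weakly convergent subnet in $X$ by reflexivity and boundedness, and identify the two limits by testing each coordinate $L_n$ against $Y^\ast$ and invoking the norm-definiteness of $\|\cdot\|_E$. The only differences are organizational: you isolate the claim that bounded weak-star convergent nets in $E$ converge weakly in $X$ before applying Krein--Smulian, whereas the paper verifies the ball-kernel closedness directly, and you spell out the norm constant, the quotient lifting, and the separability argument, which the paper leaves implicit.
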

\begin{proof}
Note first that it is clear that every $x^\ast \in X^\ast$ induces an element $x^\ast \in E^\ast$, since $E$ is continuously contained in $X$, and the implied map is injective.  To see that in fact $x^\ast \in E_\ast$, we need to verify that $x^\ast$ is weak-star continuous on $E$.  That is, we need to verify that $E \cap \ker x^\ast$ is weak-star closed in $E$. By the Krein-Smulian theorem, it is sufficient to consider $D_1 \cap E \cap \ker x^\ast$, where $D_1$ is the closed unit ball with centre $0$ in $E$. Let $(x_\alpha)$ be a net in this intersection such that $x_\alpha \to x$ weak-star in $E$. Since $E$ is continuously contained in $X$, there is a subnet $(x_{\alpha'})$ such that $x_{\alpha'} \to x_0$ weakly in $X$ for some $x_0 \in X$. By the same argument as in the proof of Theorem~\ref{thm:atomicdecomp}, we then have that $x_0 \in D_1 \cap E \cap \ker x^\ast$. Furthermore, for every $n$ and $y^\ast \in Y^\ast$, we find that
$$y^\ast(L_n x_0) = \lim_{\alpha'}  y^\ast(L_n x_{\alpha'}) =  y^\ast(L_n x).$$
Hence $L_n x_0 = L_n x$ for every $n$, and thus $x_0 = x$ (by the assumption that the norm of $E$ is indeed a norm).
\end{proof}
\section{Biduality and distance formulas} \label{sec:bidual}
In this section we will recall the framework and results of \cite{Per13, Per17}, imposing additional structure on $E$. We now suppose that $X$ is separable, in addition to being reflexive. $Y$ is still an arbitrary Banach space. We assume that $E$ is normed by a collection $\mathcal{L}$ of operators $L \colon X \to Y$, equipped with a topology that is $\sigma$-compact, locally compact, Hausdorff, and such that $\mathcal{L} \ni L \to Lx \in Y$ is continuous for every fixed $x \in X$. To reconcile this framework with that of Section~\ref{sec:atomdecomp}, we additionally assume that the topology is separable.

Let
$$E = \{x \in X \, : \, \sup_{L \in \mathcal{L}} \|Lx\|_Y < \infty \},$$
and
$$E_0 = \{x \in E \, : \, \varlimsup_{\mathcal{L} \ni L \to \infty} \|Lx\|_Y = 0\}.$$
Here $L\to \infty$ is understood in the usual sense of escaping all compacts. As before, we assume that $E$ is a Banach space under the norm $\|x\|_E  = \sup_{L \in \mathcal{L}} \|Lx\|_Y$, and that $E$ is continuously contained and dense in $X$.

Additionally, we have to assume an approximation property.
\bigskip

\noindent (AP) For every $x \in E$ there is a sequence $(x_n) \subset E_0$ such that $\|x_n\|_E \leq \|x\|_E$ and $x_n \to x$ in $X$.
\bigskip

\noindent Since closed balls of $E_0$ may be viewed as bounded convex subsets of $X$, an equivalent reformulation of (AP) is obtained by replacing the strong convergence of $x_n \to x$ by weak convergence in $X$.

We shall not recall the notion of an $M$-ideal here, but instead refer the interested reader to \cite{HWW93}. Let $\iota \colon E_0 \to E_0^{\ast \ast}$ denote the canonical embedding.
\begin{theorem}[\cite{Per13, Per17}] \label{thm:bidual1}
Suppose that {\rm (AP)} holds. Then
\begin{itemize}
\item $E_0^{\ast \ast} \simeq E$ isometrically via the $X$-$X^\ast$-pairing. More precisely, let $I \colon E_0 \to X$ denote the inclusion operator, and let $U = I^{\ast \ast}$. Then $U(E_0^{\ast \ast}) = E$ and, considered as an operator $U \colon E_0^{\ast \ast} \to E$, $U$ is the unique isometric isomorphism such that $U \iota x = x$ for all $x \in E_0$.
\item $E_0$ is an $M$-ideal in $E$.
\item For every $x \in E$ it holds that
$$\min_{x_0 \in E_0} \|x - x_0\|_{E} = \varlimsup_{L \to \infty} \|Lx\|_Y.$$
\end{itemize}
\end{theorem}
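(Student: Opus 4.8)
The plan is to prove the two duality statements $E_0^\ast = E_\ast$ and $E_0^{\ast\ast} = E$ together, and then read off the $M$-ideal property and the distance formula. I would begin by making the ambient picture concrete: the point evaluations $x \mapsto (L \mapsto Lx)$ furnish an isometric embedding $J \colon E \to C_b(\mathcal{L}, Y)$ and, by the very definition of $E_0$ together with the continuity of $L \mapsto Lx$ and the local compactness of $\mathcal{L}$, a corestriction $J_0 \colon E_0 \to C_0(\mathcal{L}, Y)$. Since $\mathcal{L}$ is separable, the supremum defining $\|\cdot\|_E$ is already realized over a countable dense subset, so Section~\ref{sec:atomdecomp} applies and Theorem~\ref{thm:atomicdecomp} supplies the predual $E_\ast$. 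The one structural fact I would record at the outset is that, by Theorem~\ref{thm:xstar}, $X^\ast$ is contained and dense in $E_\ast$; consequently, on norm-bounded subsets of $E$ the weak-star topology $\sigma(E, E_\ast)$ coincides with the topology of weak convergence in $X$. With this identification, hypothesis (AP) says exactly that the closed unit ball of $E_0$ is weak-star dense in that of $E$.

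Next I would analyze the restriction map $\rho \colon E_\ast \to E_0^\ast$, $\rho\varphi = \varphi|_{E_0}$, where $E_\ast$ is viewed inside $E^\ast = E_\ast^{\ast\ast}$. Weak-star density of $E_0$ makes $\rho$ both injective and isometric: a weak-star continuous functional vanishing on the weak-star dense set $E_0$ is zero, and the supremum defining $\|\varphi\|_{E_\ast}$ over the ball of $E$ equals the supremum over the weak-star dense ball of $E_0$. The entire weight of the theorem then rests on the \emph{surjectivity} of $\rho$, which I expect to be the main obstacle. To prove it, take $\psi \in E_0^\ast$; since $J_0$ is an isometric embedding, Hahn--Banach represents $\psi$ by an element $\mu \in C_0(\mathcal{L}, Y)^\ast$ of equal norm. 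Integrating the bounded continuous functions $Jx$ against $\mu$ defines $\varphi(x) = \int_{\mathcal{L}} Lx \, d\mu(L)$, a functional on $E$ with $\|\varphi\| \le \|\mu\|$ that agrees with $\psi$ on $E_0$. It remains to verify that $\varphi$ is weak-star continuous, i.e. $\varphi \in E_\ast$; by the Krein--Smulian theorem this need only be checked on bounded nets, where, by the first paragraph, $x_\alpha \to x$ weak-star means $Lx_\alpha \to Lx$ weakly in $Y$ for each fixed $L$. Since $\sup_\alpha \|x_\alpha\|_E < \infty$, dominated convergence against the finite-variation measure $\mu$ gives $\varphi(x_\alpha) \to \varphi(x)$. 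This is the crux of the whole argument and the only place where one must take care with the vector-valued Riesz representation (Singer's theorem) for $\mu$; note, however, that only scalar dominated convergence for the integrands $\langle Lx_\alpha, d\mu(L)\rangle$ is actually needed.

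With $\rho$ an isometric isomorphism we obtain $E_0^\ast = E_\ast$, and hence $E_0^{\ast\ast} = E_\ast^\ast = E$ by Theorem~\ref{thm:atomicdecomp}. To match this with the stated operator $U = I^{\ast\ast}$, I would observe that $I^{\ast\ast}$ is weak-star-to-weak-star continuous and that $U\iota x = Ix = x$ for $x \in E_0$; since $\iota(E_0)$ is weak-star dense in $E_0^{\ast\ast}$ by Goldstine, $U$ is the unique isometry extending the inclusion, it coincides with $\rho^\ast$, and it implements the identification through the $X$-$X^\ast$ pairing. The $M$-ideal property then follows by upgrading the algebraic splitting $E^\ast = E_\ast \oplus E_0^\perp$ (which $\rho$ already provides, since $\ker\rho = E_\ast \cap E_0^\perp = \{0\}$ and $\rho$ is onto) to the isometric $L$-decomposition $E^\ast = E_\ast \oplus_1 E_0^\perp$. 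This additivity of norms is inherited from the fact that $C_0(\mathcal{L}, Y)$ is an $M$-ideal in $C_b(\mathcal{L}, Y)$, transported along $J$, and is exactly what the abstract results of \cite{HWW93, Per13, Per17} encapsulate.

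Finally, for the distance formula the lower bound is elementary: for any $x_0 \in E_0$ and any compact $K \subset \mathcal{L}$ one has $\|x - x_0\|_E \ge \sup_{L \notin K} \|L(x - x_0)\|_Y \ge \sup_{L \notin K}\bigl(\|Lx\|_Y - \|Lx_0\|_Y\bigr)$, and letting $K$ exhaust $\mathcal{L}$ while using $x_0 \in E_0$ yields $\|x - x_0\|_E \ge \varlimsup_{L \to \infty} \|Lx\|_Y$. The reverse inequality, together with the assertion that the infimum is attained, is precisely the proximinality of the $M$-ideal $E_0$ in $E$, so it follows from $M$-structure theory \cite{HWW93} once the $M$-ideal property is in hand; comparison with the lower bound then identifies the common value as $\varlimsup_{L \to \infty}\|Lx\|_Y$. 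I expect no serious difficulty beyond the surjectivity of $\rho$ flagged above.
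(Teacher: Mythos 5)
First, a point of reference: the paper itself contains no proof of Theorem~\ref{thm:bidual1} --- it is imported wholesale from \cite{Per13, Per17}, with only the uniqueness of $U$ commented on (via \cite{Per19}) --- so your attempt must be judged against those works. Your central two-stars argument is essentially theirs: embed $E_0$ isometrically into $C_0(\mathcal{L},Y)$, use Theorem~\ref{thm:xstar} to see that $\sigma(E,E_\ast)$ agrees with the $X$-weak topology on norm-bounded sets, so that (AP) says the closed unit ball of $E_0$ is weak-star dense in that of $E$, conclude that the restriction $\rho \colon E_\ast \to E_0^\ast$ is isometric, and prove surjectivity by Hahn--Banach extension to $C_0(\mathcal{L},Y)^\ast$, a vector-valued Riesz (Singer) representation, and dominated convergence. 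One step of this is wrong as written: dominated convergence is \emph{false} for nets (indicators of finite subsets of $[0,1]$, directed by inclusion, converge pointwise to $1$ while all integrals are $0$), so ``this need only be checked on bounded nets'' is not legitimate. The repair is available within your own setup: $E_\ast$ is separable (Theorem~\ref{thm:xstar}, $X$ separable), hence closed balls of $E$ are weak-star metrizable, and Krein--Smulian reduces the claim to bounded \emph{sequences}, for which the dominated convergence argument is sound; this is also how the cited proof proceeds.

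The genuine gaps are in the last two bullet points. For the $M$-ideal property, you assert that the norm-additive splitting $E^\ast = E_\ast \oplus_1 E_0^\perp$ is ``inherited'' from the $M$-ideal $C_0(\mathcal{L},Y) \subset C_b(\mathcal{L},Y)$ and ``transported along $J$''. That is not a valid step: intersecting a subspace with an $M$-ideal does not in general yield an $M$-ideal of the subspace (there are elementary finite-dimensional counterexamples; the standard permanence result in \cite{HWW93} requires $C_0(\mathcal{L},Y) \subset JE$, which fails here), and the fallback appeal to ``the abstract results of \cite{HWW93, Per13, Per17}'' is circular, since the $M$-ideal assertion \emph{is} the theorem of \cite{Per17} you are proving. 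What is actually required is to prove the norm additivity of your algebraic splitting: extend $\Phi \in E^\ast$ norm-preservingly to $C_b(\mathcal{L},Y)^\ast$, decompose the extension into its part carried by $\mathcal{L}$ and its part at infinity, check (using the same dominated-convergence lemma as in the surjectivity step) that the former restricts to an element of $E_\ast$ while the latter annihilates $E_0$, and add norms. Similarly, the distance formula's upper bound does not follow from what you have: proximinality of $M$-ideals gives \emph{attainment} of $\distanza(x,E_0)$ but says nothing about its value, so ``comparison with the lower bound'' identifies nothing --- at that point you only know $\distanza(x,E_0) \ge \varlimsup_{L\to\infty}\|Lx\|_Y$. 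The missing inequality $\distanza(x,E_0) \le \varlimsup_{L\to\infty}\|Lx\|_Y$ needs its own argument, for instance by duality: $\distanza(x,E_0) = \sup\{|\eta(x)| : \eta \in E_0^\perp, \ \|\eta\| \le 1\}$, and each such $\eta$, extended and split as above, has vanishing $\mathcal{L}$-part (that part lies in $E_\ast$ and kills the weak-star dense ball of $E_0$, hence is zero), so $\eta$ acts only at infinity and $|\eta(x)| \le \varlimsup_{L\to\infty}\|Lx\|_Y$ follows.
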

The uniqueness of $U$ has not been explicitly stated before, but follows as in \cite[Theorem~2]{Per19}.

Choosing a dense sequence of operators $(L_n)$ in $\mathcal{L}$ , we have by continuity that
$$\|x\|_E = \sup_n \|L_n x\|_Y,$$
allowing us to apply the results of Section~\ref{sec:atomdecomp}. Theorems~\ref{thm:atomicdecomp} and \ref{thm:xstar} yield an isometric isomorphism $J \colon E \to E_\ast^\ast$ such that $Jx(x^\ast) = x^\ast(x)$ for $x^\ast \in X^\ast \subset E_\ast$ and $x \in E$. Since $E_0$ is an $M$-ideal in $E_0^{\ast \ast}$, $E_0^\ast$ is a strongly unique predual, implying that the isometry $JU \colon E_0^{\ast \ast} \to E_{\ast}^\ast$ must arise as the adjoint $JU = K^\ast$ of an isometric isomorphism $K \colon E_\ast \to E_0^{\ast}$. Note, for $x^\ast  \in X^\ast \subset E_\ast$ and $x \in E_0$, that
$$Kx^\ast(x) = JU\iota x(x^\ast) = x^\ast(x).$$
\begin{theorem} \label{thm:bidual2}
Suppose that {\rm (AP)} holds. Then the identity map, $K(x^\ast) = x^\ast$,  $x^\ast \in X^\ast$, extends to an isometric isomorphism $K \colon E_\ast \to E_0^\ast$.
\end{theorem}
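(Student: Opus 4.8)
The plan is to assemble the statement from the three isometric isomorphisms already at our disposal and then invoke the strong uniqueness of the predual of an $M$-embedded space to upgrade a purely isometric identification to an adjoint. First I would record the two ingredients. Theorems~\ref{thm:atomicdecomp} and~\ref{thm:xstar} furnish the isometric isomorphism $J \colon E \to E_\ast^\ast$ with $Jx(x^\ast) = x^\ast(x)$ for $x^\ast \in X^\ast \subset E_\ast$ and $x \in E$; and, since (AP) holds, Theorem~\ref{thm:bidual1} furnishes $U = I^{\ast\ast} \colon E_0^{\ast\ast} \to E$, an isometric isomorphism satisfying $U\iota x = x$ for $x \in E_0$. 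Composing these, $JU \colon E_0^{\ast\ast} \to E_\ast^\ast$ is an isometric isomorphism between two \emph{dual} Banach spaces.

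The crux is to realize $JU$ as an adjoint. Theorem~\ref{thm:bidual1} also asserts that $E_0$ is an $M$-ideal in $E \simeq E_0^{\ast\ast}$, i.e.\ that $E_0$ is $M$-embedded. By $M$-structure theory \cite{HWW93}, an $M$-embedded space has the property that $E_0^\ast$ is a \emph{strongly unique} predual of $E_0^{\ast\ast}$: every isometric isomorphism of $E_0^{\ast\ast}$ onto a dual space is automatically weak-star continuous, hence the adjoint of an isometric isomorphism of the preduals. Applying this to $JU$ yields a (necessarily unique) isometric isomorphism $K \colon E_\ast \to E_0^\ast$ with $JU = K^\ast$. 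This descent of $JU$ to the level of preduals is the one genuinely non-elementary step; everything else is bookkeeping, and I expect it to be the main obstacle, resting entirely on the $M$-embeddedness of $E_0$ and the cited strong-uniqueness principle.

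It then remains to identify $K$ on the dense subspace $X^\ast \subset E_\ast$. Fixing $x^\ast \in X^\ast$ and $x \in E_0$, unwinding the adjoint and using $U\iota x = x$ together with $Jx(x^\ast) = x^\ast(x)$ gives
\[
(Kx^\ast)(x) = (K^\ast \iota x)(x^\ast) = (JU\iota x)(x^\ast) = Jx(x^\ast) = x^\ast(x).
\]
Thus $Kx^\ast$ is simply the restriction of $x^\ast$ to $E_0$, so under the natural inclusion $X^\ast \hookrightarrow E_0^\ast$ the map $K$ acts as the identity. Since $X^\ast$ is dense in $E_\ast$ by Theorem~\ref{thm:xstar} and $K$ is an isometry, $K$ is precisely the unique continuous extension of this identity map, which is the assertion of the theorem.
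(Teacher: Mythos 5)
Your proposal is correct and follows essentially the same route as the paper: compose $J \colon E \to E_\ast^\ast$ (from Theorems~\ref{thm:atomicdecomp} and~\ref{thm:xstar}) with $U = I^{\ast\ast}$ (from Theorem~\ref{thm:bidual1}), invoke the strong uniqueness of the predual of the $M$-embedded space $E_0$ to write $JU = K^\ast$, and then identify $K$ as the identity on the dense subspace $X^\ast \subset E_\ast$. The computation $(Kx^\ast)(x) = (JU\iota x)(x^\ast) = x^\ast(x)$ and the density argument are exactly those of the paper.
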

Less formally, we might simply say that $E_0^\ast = E_\ast$ and that $E_\ast^\ast = E$, isometrically via the $X$-$X^\ast$-pairing.

\section{The Bourgain--Brezis--Mironescu space}
We restrict the discussion to $d \geq 2$. If $d=1$, then $\bbb = \BMO$ and the results are already known. If $d \geq 2$, \cite[Theorem~2]{BBM15} states that $\bbb$ is continuously contained in the Marcinkiewicz space $L^{d/(d-1), \infty}$ (modulo constants). Therefore, as our choice of separable reflexive space containing $\bbb$, we may take $X = L^p((0,1)^d)/\mathbb{C}$, where $1 < p < d/(d-1)$.  Let $Y = L^1((0,1)^d)$.

As before, let $\mathcal{F}_\varepsilon$ denote a collection of disjoint open $\varepsilon$-cubes such that $|\mathcal{F}_\varepsilon| \leq \varepsilon^{1-d}$. Thus $\mathcal{F}_\varepsilon = (Q_\varepsilon(a_j))_{j=1}^m$, where $Q_\varepsilon(a_j) \subset (0,1)^d$ is an $\varepsilon$-cube centered at $a_j \in (0,1)^d$, and $m \leq  \varepsilon^{1-d}$. To each such collection $\mathcal{F}_\varepsilon$, associate the operator $L_{\mathcal{F}_\varepsilon} \colon L^p/\mathbb{C} \to L^1$ given by
	$$L_{\mathcal{F}_\varepsilon} f = \varepsilon^{d-1}\frac{1}{|Q_\varepsilon|}\sum_{j=1}^{|\mathcal{F}_{\varepsilon}|}\chi_{Q_\varepsilon(a_j)}(f-f_{Q_\varepsilon(a_j)}).$$
Since the cubes are disjoint, we then have that
	$$\|L_{\mathcal{F}_\varepsilon} f\|_{L^1} = \varepsilon^{d-1} \frac{1}{|Q_\varepsilon|} \sum_{j=1}^{|\mathcal{F}_{\varepsilon}|} \int_{Q_\varepsilon(a_j)}|f - f_{Q_\varepsilon(a_j)}| \, dx,$$
and thus that
$$\bbb = \{f \in L^p/\mathbb{C} \, : \, \sup_{0 < \varepsilon \leq 1} \sup_{\mathcal{F}_\varepsilon} \|L_{\mathcal{F}_\varepsilon} f\|_{L^1} < \infty\}.$$

To place ourselves in the framework of Section~\ref{sec:bidual}, we have to construct an appropriate topology on the set of collections $\mathcal{F}_\varepsilon$. Each collection is uniquely determined by $\varepsilon$ and the centres $a_1, \ldots, a_m$ of the cubes $(0,1)^d \supset Q_\varepsilon(a_j) \in \mathcal{F}_\varepsilon$. We thus write $\mathcal{F}_\varepsilon = \mathcal{F}(\varepsilon, a_1, \ldots, a_m)$. For each $m \in \mathbb{N}$, let
$$\mathcal{L}_m = \{\mathcal{F}_\varepsilon = \mathcal{F}(\varepsilon, a_1, \ldots, a_m) \, : \, |\mathcal{F}_\varepsilon| = m\},$$
with the topology induced by the parametrization $(\varepsilon, a_1, \ldots, a_m) \in (0, 1] \times (0,1)^{md}$. Consider the map $\pi_m \colon \mathcal{L}_m \to (0, m^{-1/(d-1)}]$ given by $\pi_m(\mathcal{F}_\varepsilon) = \varepsilon$. Since the cubes $Q_\varepsilon(a_j)$ are open, the map $\pi_m$ is proper. That is, $\pi^{-1}_m([\delta, m^{-1/(d-1)}])$ is compact for every $0 < \delta \leq m^{-1/(d-1)}$.  By the dominated convergence theorem, the map $\mathcal{L}_m \ni \mathcal{F}_\varepsilon \mapsto L_{\mathcal{F}_\varepsilon}f \in L^1$ is continuous for every fixed $f \in L^p$ and $m$.

We endow the full collection $\mathcal{L} = \{\mathcal{F_\varepsilon}\}$ with the disjoint union topology,
$$\mathcal{L} = \coprod_{m=1}^\infty \mathcal{L}_m.$$
By the corresponding properties of $\mathcal{L}_m$, $\mathcal{L}$ is $\sigma$-compact, locally compact, Hausdorff, separable, and $\mathcal{L} \ni \mathcal{F}_\varepsilon \mapsto L_{\mathcal{F}_\varepsilon}f \in L^1$ is continuous for every $f \in L^p$. Furthermore, the map
$$\pi = \coprod_{m=1}^\infty \pi_m \colon \mathcal{L} \to (0,1], \quad \pi(\mathcal{F}_\varepsilon) = \varepsilon,$$
is proper, since $\pi_m$ is proper, and for every $\delta > 0$ there is an $N$ such that $\pi^{-1}([\delta,1]) \subset \coprod_{m \leq N} \mathcal{L}_m$. For a continuous function $F \colon \mathcal{L} \to \mathbb{R}$, this implies that
$$\varlimsup_{\mathcal{L} \ni \mathcal{F}_\varepsilon \to \infty} F(\mathcal{F}_\varepsilon) =\varlimsup_{\varepsilon\to0} \sup_{\mathcal{F}_\varepsilon} F(\mathcal{F}_\varepsilon).$$

Hence the vanishing Bourgain--Brezis--Mironescu space $\bbb_0$ fits into the framework,
$$\bbb_0 = \{f \in \bbb \, : \, \varlimsup_{\varepsilon\to0} \sup_{\mathcal{F}_\varepsilon} \|L_{\mathcal{F}_\varepsilon}f \|_{L^1} = 0\}.$$
Before applying our results, we must prove that (AP) holds.
\begin{lemma}\label{ap-property}
For every $f\in \bbb$, there is a sequence $(f_n) \subset \bbb_0$ such that $\|f_n\|_{\bbb_0} \leq \|f\|_{\bbb}$ and $f_n \to f$ in $L^p$, $1 < p < d/(d-1)$.
\end{lemma}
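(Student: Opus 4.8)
The plan is to build the approximants $f_n$ by mollification, since convolution with a smooth approximate identity both lands inside $\bbb_0$ (any Lipschitz $g$ satisfies $[g]_\eee \le \eee\,\|\nabla g\|_\infty \to 0$, so smooth functions belong to $\bbb_0$) and is non-increasing for the $\bbb$-norm. Writing $\mathrm{osc}_Q h := \tfrac{1}{|Q|}\int_Q|h-h_Q|\de x$, $\tau_y f(x)=f(x-y)$, and taking a mollifier $G_t$ with $\int G_t = 1$, the contraction comes from an averaging (Minkowski/Jensen) estimate,
\[
\mathrm{osc}_{Q}(f * G_t) \le \int G_t(y)\,\mathrm{osc}_{Q}(\tau_y f)\de y = \int G_t(y)\,\mathrm{osc}_{Q - y}(f)\de y .
\]
The key point is that for each fixed $y$ the translated family $\{Q-y : Q\in\fff_\eee\}$ is again a collection of disjoint $\eee$-cubes of the same cardinality, so $\eee^{d-1}\sum_Q \mathrm{osc}_{Q-y}(f)\le[f]_\eee\le\|f\|_\bbb$ — \emph{provided the translated cubes still lie where $f$ is defined}.

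The main obstacle is exactly this proviso: for $f$ defined only on $(0,1)^d$, translating cubes near the boundary pushes them outside, and there is no room to mollify. I would create room with a dilation. Fix $\lambda\in(0,1)$, let $c$ be the centre of the cube, and set $\phi_\lambda(x)=c+\lambda(x-c)$; this maps the enlarged cube $Q^\ast_\lambda := c + \lambda^{-1}\big((0,1)^d - c\big)$, of side $\lambda^{-1}>1$, bijectively onto $(0,1)^d$, so $S_\lambda f := f\circ\phi_\lambda$ is genuinely defined on $Q^\ast_\lambda\supset\overline{(0,1)^d}$. Then $h_{\lambda,t} := S_\lambda f * G_t$ is well defined and smooth on $(0,1)^d$ as soon as $\supp G_t$ is small compared with the margin $(\lambda^{-1}-1)/2$, whence $h_{\lambda,t}\in\bbb_0$.

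For the norm estimate I would apply the averaging inequality to $g=S_\lambda f$ and then undo the dilation cube by cube: $\phi_\lambda$ sends a disjoint family of $\eee$-cubes in $Q^\ast_\lambda$ to a disjoint family of $(\lambda\eee)$-cubes in $(0,1)^d$, and $\mathrm{osc}$ is invariant under this affine change of variables, so for each fixed $y$,
\[
\eee^{d-1}\sum_{Q\in\fff_\eee}\mathrm{osc}_{Q-y}(S_\lambda f) = \lambda^{1-d}\,(\lambda\eee)^{d-1}\sum_{Q}\mathrm{osc}_{\phi_\lambda(Q-y)}(f)\le \lambda^{1-d}\,[f]_{\lambda\eee}\le\lambda^{1-d}\,\|f\|_\bbb .
\]
The cardinality constraint is respected, since a family of at most $\eee^{1-d}$ cubes is a fortiori a family of at most $(\lambda\eee)^{1-d}$ cubes. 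Integrating against $G_t$ gives $\|h_{\lambda,t}\|_\bbb\le\lambda^{1-d}\|f\|_\bbb$.

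The factor $\lambda^{1-d}>1$ seems to violate the required inequality $\|f_n\|_{\bbb_0}\le\|f\|_\bbb$, but it is harmless: I simply renormalise. Taking $\lambda_n\to1^-$ and $t_n\to0$ with $\supp G_{t_n}$ small enough that $h_{\lambda_n,t_n}$ stays defined on $(0,1)^d$, set $f_n := \lambda_n^{d-1}\,h_{\lambda_n,t_n}$. Then $f_n\in\bbb_0$ and $\|f_n\|_\bbb = \lambda_n^{d-1}\|h_{\lambda_n,t_n}\|_\bbb\le\|f\|_\bbb$, while $f_n\to f$ in $L^p$ because $S_\lambda f\to f$ in $L^p$ as $\lambda\to1$ (continuity of dilations on $L^p$), $S_\lambda f * G_t\to S_\lambda f$ as $t\to0$, and the scalar $\lambda_n^{d-1}\to1$; here $f\in L^p$ is available since $\bbb\hookrightarrow L^{d/(d-1),\infty}\hookrightarrow L^p$ for $p<d/(d-1)$. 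The only remaining care is in these two routine limits and in checking that the dilated, mollified function is Lipschitz, hence in $\bbb_0$; the conceptual content is entirely in the averaging contraction together with the renormalisation that absorbs the dilation loss.
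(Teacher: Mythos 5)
Your proposal is correct and is essentially the paper's own argument: your dilation $\phi_\lambda$ centred at the midpoint of the cube is exactly the paper's map $x \mapsto (1-2\ttt)x + \tvet$ with $\lambda = 1-2\ttt$, and the averaging estimate, the affine invariance of the mean oscillation, the cardinality check, and the renormalisation by $\lambda^{d-1}$ all match the paper's proof step for step. The only cosmetic difference is that you keep the dilation parameter $\lambda$ and the mollification scale $t$ independent, whereas the paper couples them through $\ttt$; nothing changes as a result.
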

\begin{proof}
For $0<\ttt<\frac12$, set $\tvet=(\ttt,\ldots,\ttt)\in \reale^d$, and let
\[g(x)=f((1-2\ttt)x+\tvet),\quad x\in (0,1)^d\,.\]
Note that $g(x)$ is actually defined for all points $x \in (-\ttt, 1+\ttt)^d$.
Choose a function $\psi \in C_c(\reale^d)$ such that $\psi \ge 0$, $\supp \psi \subset (-1,1)^d$, and $\int \psi (y)\,\de y=1$. Fix $\ttt$ and let $\vf(y) = \vf_\ttt(y) = \ttt^{-d}\psi(y/\ttt)$. Throughout the proof, integration with respect to $y$ will be understood to be taken over $\supp\vf \subset (-\ttt, \ttt)^d$. Let
\[h(x)=h_\ttt(x)=\vf*g(x)=\int \vf(y)\,g(x-y)\,\de y\,.\]
For cubes $Q_\varepsilon \subset (0,1)^d$ , we use the notation
\[M(f,Q_\varepsilon)=\frac{1}{|Q_\varepsilon|} \int_{Q_\varepsilon} |f-f_{Q_\varepsilon}|\, \de x.\]
Note that
\begin{align*}
 h(x)-h_{Q_\varepsilon} &=h (x)-\frac1{|Q_\varepsilon|}\int_{Q_\varepsilon}\left(\int\vf(y)\,g(z-y)\,\de y\right)\de z \\
&=\int\vf(y)\left(g(x-y)- g_{Q_\varepsilon - y} \right) \, \de y,
\end{align*}
and thus
\begin{equation} \label{eq:APest}
M(h, Q_\varepsilon)\leq \int\vf(y)\,M(g,Q_\varepsilon-y)\,\de y.
\end{equation}
Furthermore,
\begin{align*}
M(g,Q_\varepsilon-y) &= \frac{1}{|Q_\varepsilon(y,\ttt)|}\int_{Q_\varepsilon(y,\ttt)}\left|f(u)-\frac{1}{|Q_\varepsilon(y,\ttt)|}\int_{Q_\varepsilon(y,\ttt)}f(v)\,\de v\right|\de u\\
&=M(f,Q_\varepsilon(y,\ttt))
\end{align*}
where $Q_\varepsilon(y,\ttt)=\{(1-2\ttt)x+\tvet:x\in Q_\varepsilon-y\}$. Note that $Q_\varepsilon(y,\ttt) \subset (0,1)^d$ is an $(1-2\ttt)\varepsilon$-cube, for every $y$.

Now consider a collection $\mathcal{F}_\varepsilon$ of disjoint $\varepsilon$-cubes such that $|\fff_\eee| \leq \eee^{1-d}$. For each $y\in\supp \vf$,
$$\mathcal{F}_\varepsilon(y, \ttt) = \{Q_\varepsilon(y,\ttt) \, : \, Q_\varepsilon \in \mathcal{F}_\varepsilon \}$$
forms a collection of disjoint $(1-2\ttt)\eee$-cubes, with
$$|\mathcal{F}_\varepsilon(y, \ttt)|=|\fff_\eee| \le ((1-2\ttt)\,\eee)^{1-d}.$$ Hence by \eqref{eq:APest},
\begin{equation}\label{eq:confronto}
\begin{array}{rl}
\ds \eee^{d-1}\sum_{Q_\varepsilon\in\fff_\eee}M(h,Q_\varepsilon)\kern-.7em &\ds\leq \frac1{(1-2\ttt)^{d-1}} \int \varphi(y)\,((1-2\ttt)\eee)^{d-1}\sum_{\mathcal{F}_\varepsilon(y, \ttt)}M(f,Q_\varepsilon(y,\ttt)) \de y\\
&\ds\leq \frac1{(1-2\ttt)^{d-1}}\,\|f\|_{\bbb}.
\end{array}
\end{equation}
Therefore
\[\|h\|_{\bbb}\le \frac1{(1-2\ttt)^{d-1}}\,\|f\|_{\bbb}\,.\]
To conclude, choose a sequence $(\ttt_n)$ such that $\ttt_n \to 0$, and let
$$f_n = (1-2\ttt_n)^{d-1} h_{\ttt_n}.$$
Then $(f_n) \subset \bbb_0$, $\|f_n\|_{\bbb_0} \leq \|f\|_{\bbb}$, and $f_n \to f$ in $L^p$.
\end{proof}
\begin{remark}
Note that if $f\in \bbb_0$, then the sequence $(f_n)$ constructed in the proof of Lemma~\ref{ap-property} converges to $f$ in $\bbb$. Hence the space $\UC$ of uniformly continuous functions on $(0,1)^d$ is dense in $\bbb_0$.

Indeed, the inequality \eqref{eq:confronto} shows that
\[[f_n]_\eee\leq [f]_{(1-2\ttt)\eee}, \quad 0 < \varepsilon \leq 1.\]
Since $f\in \bbb_0$, there is for every $\sigma>0$ an $\eee_\sigma \in (0,1)$ such that for $0<\eee<\eee_\sigma$ we have that
\[[f]_\eee<\sigma.\]
Thus, for such $\eee$,
\[[f-f_n]_\eee\leq [f]_\eee+[f_n]_\eee<2\sigma.\]
On the other hand, for any family $\mathcal{F}_\eee$ of disjoint $\eee$-cubes we have the trivial estimate
\[\eee^{d-1}\sum_{Q_e\in\mathcal{F}_\eee}M(f-f_n,Q_\eee)\leq 2\eee^{-1}\|f-f_n\|_{L^1((0,1)^d)}.\]
Hence
\[\lim_n\sup_{\eee_\sigma\leq \eee<1}[f-f_n]_\eee=0,\]
since $f_n\to f$ in $L^1((0,1)^d)$. It follows that $f_n \to f$ in $\bbb$, since $\sigma$ was arbitrary.
\end{remark}

To prove Theorem~\ref{thm:Bpredual}, choose a dense sequence $(\mathcal{F}_{\varepsilon_n}^n) \subset \mathcal{L}$. Then Theorem~\ref{thm:atomicdecomp} yields that for every $\varphi \in \bbb_\ast$ there is a sequence $(y_n^\ast) \in \ell^1(L^\infty( (0,1)^d))$ of comparable norm and such that
$$\varphi = \sum_{n=1}^\infty L_{\mathcal{F}_{\varepsilon_n}^n}^\ast y_n^\ast = \sum_{n=1}^\infty L_{\mathcal{F}_{\varepsilon_n}^n} y_n^\ast.$$
Then $\lambda_n = 2\|y_n^\ast\|_{L^\infty}$ and $g_n = L_{\mathcal{F}_{\varepsilon_n}^n} y_n^\ast/(2\|y_n^\ast\|_{L^\infty})$ satisfy the desired properties, $\varphi = \sum_n \lambda_n g_n$, and
$$\sum_{n=1}^\infty |\lambda_n| \leq C\|\varphi\|_{\bbb_\ast},$$
where $C$ is an absolute constant. Conversely, suppose that we are given a functional $\varphi = \sum_n \lambda_n g_n$ of the form in the theorem. Let $\mathcal{F}^n_{\varepsilon_n}$ denote the collection of cubes associated with $g_n$. If necessary we may complete $(\mathcal{F}^n_{\varepsilon_n})$ to a dense sequence. Theorem~\ref{thm:atomicdecomp} then shows that $\varphi$ is weak-star continuous on $\bbb$, and it is immediate that
$$\|\varphi\|_{\bbb_\ast} \leq \sum_{n=1}^\infty |\lambda_n|.$$

Theorem~\ref{thm:Bbidual} is an immediate consequence of Theorems~\ref{thm:bidual1} and \ref{thm:bidual2}.

\end{document}